\newtheorem{theorem}{Theorem}[section]
\newtheorem{lemma}[theorem]{Lemma}
\newtheorem{proposition}[theorem]{Proposition}
\newtheorem{corollary}[theorem]{Corollary}
\newtheorem{question}[theorem]{Question}
\newtheorem{definition}[theorem]{Definition}
\numberwithin{equation}{section}
\begin{document}

\newcommand{\bb}{\mathfrak{b}}
\newcommand{\cc}{\mathfrak{c}}
\newcommand{\N}{\mathbb{N}}
\newcommand{\R}{\mathbb{R}}
\newcommand{\A}{{\mathbb{R}}_+\cup\{0\}}
\newcommand{\forces}{\Vdash}
\newcommand{\LL}{\mathbb{L}}
\newcommand{\K}{\mathbb{K}}
\newcommand{\Q}{\mathbb{Q}}

\title{Independent families in Boolean algebras with some separation properties}

\author{Piotr Koszmider}
\thanks{The first author was partially supported by the National Science Center research grant 2011/01/B/ST1/00657.} 
\email{P.Koszmider@Impan.pl}
\address{Institute of Mathematics, Polish Academy of Sciences,
ul. \'Sniadeckich 8,  00-956 Warszawa, Poland}

\author{Saharon Shelah}
\thanks{}
\email{shelah@math.huji.ac.il}
\address{Department of Mathematics, The Hebrew University of Jerusalem, 90194 Jerusalem, Isreal}
\address{Rutgers University, Piscataway, NJ 08854-8019, USA}

%
\subjclass{}
%
%
%
\begin{abstract} 
We prove that any Boolean algebra with the subsequential completeness property
contains an independent family of size continuum. This improves a result of Argyros from the 80ties
which asserted the existence of an uncountable independent family. In fact we 
prove it for a bigger class of  Boolean algebras satisfying much weaker properties. It follows that 
the Stone spaces $K_{\mathcal A}$ of all such Boolean algebras $\mathcal A$
contains a copy of the \v Cech-Stone compactification of the integers
$\beta\N$ and the Banach space $C(K_{\mathcal A})$ has $l_\infty$ as a quotient.
Connections with the Grothendieck property in Banach spaces  are
discussed.
\end{abstract}

\maketitle

\markright{}
\section{Independent families}

By an antichain in a Boolean algebra $\mathcal A$
we will mean a pairwise disjoint subset of $\mathcal A$, i.e., a $\mathcal B\subseteq \mathcal A$ such that
$A\wedge B=0$ whenever $A$ and $B$ are two distinct elements of $\mathcal B$.
An independent family in a Boolean algebra $\mathcal A$ is a family $\{A_i: i\in I\}\subseteq \mathcal A$
such that
$$\bigwedge_{i\in F} A_i\wedge \bigwedge_{i\in G} A_i^{-1}\not=0$$
for any two disjoint finite subsets $F, G\subseteq I$. For more information on Boolean algebras see
\cite{handbookbas}. $K_{\mathcal A}$ will stand for the Stone space of $\mathcal A$ and
$C(K)$ for the Banach space of real valued continuous functions on $K$. $[A]$ will stand for
the clopen subset $\{x\in K_{\mathcal A}: A\in x\}$ for any $A$ in a Boolean algebra $\mathcal A$
and $1_X$ for the characteristic function of $X\subseteq K_{\mathcal A}$.
Unexplained notions concerning Banach spaces can be found in \cite{diestel}.
We will consider several separation properties in Boolean algebras:

\begin{definition}\label{sspdefinition}
 A Boolean algebra $\mathcal A$ is said to have a weak subsequential separation property
if and only if given any countably infinite antichain $\{A_n: n\in \N\}\subseteq \mathcal A$ there is $A\in \mathcal A$
such that both of the sets $$\{n\in \N: A_n\leq A\}\ \ \hbox{and}\ \ \{n\in \N: A_n\wedge A=0\}$$
are infinite.
\end{definition}

This is a natural generalization of the following  well known:

\begin{definition}[\cite{haydon}] A Boolean algebra $\mathcal A$ is said to have a subsequential completeness property
if and only if given any countably infinite antichain $\{A_n: n\in \N\}\subseteq \mathcal A$ there is 
and infinite $M\subseteq \N$ such that in $\mathcal A$ there is the supremum of $\{A_n: n\in M\}$.
\end{definition}

One more separation property introduced in \cite{haydonserdica} is:

\begin{definition}\label{hsspdefinition}
 A Boolean algebra $\mathcal A$ is said to have a  subsequential separation property
if and only if given any countably infinite antichain $\{A_n: n\in \N\}\subseteq \mathcal A$ there
is   an $A\in \mathcal A$
such that  the set
  $$\{ n\in \N: A_{2n}\leq A\ \hbox{and}\  A_{2n+1}\wedge A=0\}$$ is infinite
\end{definition}

The subsequential completeness property was introduced in \cite{haydon}. This paper included applications of 
the subsequential completeness property in the theory of
Banach spaces  as well as a result of Argyros (Proposition 1G)
that every Boolean algebra with the subsequential completeness property has an uncountable independent family.
Here we strengthen this result and prove:

\begin{theorem}\label{maintheorem} Suppose that $\mathcal A$ is an infinite Boolean algebra which has a weak subsequential
separation property. Then $\mathcal A$ contains an independent family of cardinality continuum.
\end{theorem}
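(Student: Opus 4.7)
Plan: The idea is to replicate the classical Hausdorff--Fichtenholz construction of a continuum-sized independent family in $\mathcal{P}(\N)$ inside $\mathcal{A}$, using the weak subsequential separation property to produce the required separators.

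Since $\mathcal{A}$ is infinite, I fix a countably infinite antichain of nonzero elements in $\mathcal{A}$ and re-index it by the countable set $S=\{(n,\sigma):n\in\N,\ \sigma\subseteq\mathcal{P}(n)\}$, writing it $(a_t)_{t\in S}\subseteq\mathcal{A}$. For each $X\subseteq\N$ let $A_X=\{(n,\sigma)\in S:X\cap n\in\sigma\}$. The classical Hausdorff--Fichtenholz argument shows that $\{A_X:X\subseteq\N\}$ is an independent family of size $\cc$ in $\mathcal{P}(S)$, with both $A_X$ and $S\setminus A_X$ infinite for every $X$. It therefore suffices to produce, for each $X\subseteq\N$, some $B_X\in\mathcal{A}$ with $a_t\leq B_X$ whenever $t\in A_X$ and $a_t\wedge B_X=0$ whenever $t\in S\setminus A_X$: for any disjoint finite $F,G\subseteq\mathcal{P}(\N)$ Hausdorff supplies $t\in\bigcap_{X\in F}A_X\setminus\bigcup_{X\in G}A_X$, whence $0\neq a_t\leq\bigwedge_{X\in F}B_X\wedge\bigwedge_{X\in G}\neg B_X$, giving independence of the family $\{B_X:X\subseteq\N\}$.

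The main obstacle is extracting $\cc$-many prescribed separators $B_X$ from the weak subsequential separation property, which only asserts the existence of \emph{some} separator per countable antichain, with no a priori control over which sides of $S$ it realizes. I plan to proceed by a fusion-style recursion: at each stage apply the separation property to a current sub-antichain, pass to a further sub-antichain on which the newly produced separator is decisive on every element (with both sides still infinite), and rebuild the antichain $(a_t)_{t\in S}$ in tandem, so that its structure can be steered toward supporting all the Hausdorff patterns simultaneously. The delicate point, and the technical core of the proof, is arranging the bookkeeping so that separators obtained at different stages remain compatible, and so that enough data is assembled from countably many applications of the separation property to witness each of the continuum many required $B_X$'s---by a diagonal enumeration of the $X$'s together with finite Boolean adjustments of the separators that successive applications of the property produce.
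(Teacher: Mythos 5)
There is a genuine gap: the entire technical content of the proof is deferred to an unspecified ``fusion-style recursion,'' and the plan as described cannot be completed. You ask for, for each $X\subseteq\N$, an element $B_X\in\mathcal A$ realizing the \emph{exact} Hausdorff--Fichtenholz pattern on the fixed antichain ($a_t\leq B_X$ for every $t\in A_X$ and $a_t\wedge B_X=0$ for every $t\notin A_X$, or at least enough of this pattern to run the Hausdorff argument). The weak subsequential separation property gives no control whatsoever over which pattern a separator realizes --- only that both the ``inside'' set and the ``disjoint'' set are infinite --- so each prescribed $B_X$ must be manufactured by repeated refinement. But a fusion along a ``diagonal enumeration of the $X$'s'' has only countably many stages (each stage shrinks the working antichain to an infinite sub-antichain, and after $\omega$ steps you are out of room without extra hypotheses such as $\mathfrak p=\mathfrak c$), whereas you need continuum many $B_X$'s, each tied to a different continuum-indexed pattern. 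No amount of ``finite Boolean adjustments'' closes this gap: finitely many modifications of a separator change its behaviour on only finitely many $a_t$'s, while the patterns $A_X$ differ from one another on infinite sets. In effect your plan tries to prove that $\mathcal A$ contains a copy of $\wp(\N)$ acting as a full separating family over the antichain, which is a much stronger conclusion than the theorem and is not available from the hypothesis.

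The idea you are missing is that one should \emph{not} try to prescribe the pattern of any separator. The paper's route: since $\mathcal A$ has no nontrivial convergent sequences in its Stone space it is not superatomic, hence contains a countable free subalgebra, inside which one finds elements $\{A_s: s\in\{0,1\}^{<\N}\}$ satisfying $A_s\wedge A_t=0$ for $s\subsetneq t$ while $A_{u_1}\wedge\dots\wedge A_{u_n}\neq 0$ whenever $u_1,\dots,u_n$ are pairwise $\subseteq$-incomparable. For each branch $x\in 2^\N$ the set $\{A_{x|m}: m\in\N\}$ is an antichain; one application of the separation property per branch yields $A_x$ that contains $A_{x|m}$ for infinitely many $m$ and is disjoint from $A_{x|m}$ for infinitely many $m$ --- with no coordination between branches. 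Given distinct $x_1,\dots,x_n$ and a partition $F\cup G$, you descend each branch independently to a level $m_i$ (beyond the splitting level) where its own separator is decisive in the required direction; the resulting nodes are pairwise incomparable, so their meet is nonzero and witnesses independence. The tree structure absorbs all the pattern-matching that your approach tries to force onto the separators themselves.
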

\begin{proof} Clearly, the Stone space of $\mathcal A$ cannot have a nontrivial convergent sequences
and so cannot be a dispersed space, consequently $\mathcal A$ is not superatomic and so contains a countable free 
infinite Boolean algebra $\mathcal B\subseteq \mathcal A$. 
$\{0,1\}^{<\N}$ will stand for finite $0-1$ sequences.

A countable free Boolean algebra contains all countable Boolean algebras as subalgebras. We will identify
a particular subalgebra $\mathcal C\subseteq \mathcal A$ generated by generators
$\{A_s: s\in \{0,1\}^{<\N}\}$ such that whenever  $\mathcal D$ is a countable free Boolean algebra
generated by free generators $\{D_s: s\in \{0,1\}^{<\N}\}$, then there is a surjective homomorphism
$h: \mathcal D\rightarrow \mathcal C$ such that $h(D_s)=A_s$ and the kernel of
$h$ is the ideal generated by the elements of the form $A_s\wedge A_t$ such that $s\subsetneq t$.

In particular it follows that whenever $s\subseteq t$, then
$$A_s\wedge A_t=0.\leqno *)$$
Now we will note that if $n$ is an integer, $u_i\in\{0,1\}^{<\N}$ for $1\leq i\leq n$, then
$$A_{u_1}\wedge... \wedge A_{u_n}\not=0\leqno **)$$ 
unless $u_i\subseteq u_j$ for some distinct $1\leq i,j\leq n$.
 Indeed, suppose that $u_i\not\subseteq u_j$ for
all distinct  $1\leq i, j\leq n$, consider an arbitrary element of the kernel of $h$, it may be assumed to be of the form
$$\bigvee_{1\leq i\leq k} (D_{s_i}\wedge D_{t_i})$$
for some $k\in \N$ and $s_i, t_i\in\{0,1\}^{<\N}$ satisfying $s_i\subsetneq t_i$ for all $1\leq i\leq k$.
Note that $\{u_1, ..., u_n\}\cap \{s_j, t_j\}$ has at most one element for
each $1\leq j\leq k$. Let $v_j\in \{s_j, t_j\}$ be the other element for $1\leq j\leq k$. 
So $\{v_1, ..., v_k\}$ and $\{u_1, ..., u_n\}$ are disjoint and by the independence of the generators
we have
$$0\not=\bigwedge_{1\leq i\leq k} D_{v_i}^{-1}\wedge \bigwedge_{1\leq i\leq n} D_{u_i}$$
so
$$0\not=\bigwedge_{1\leq i\leq k} (D_{s_i}^{-1}\vee D_{t_i}^{-1})\wedge
 \bigwedge_{1\leq i\leq n} D_{u_i}$$
and hence
$$\bigwedge_{1\leq i\leq n} D_{u_i}\not\leq \bigvee_{1\leq i\leq k} (D_{s_i}\wedge D_{t_i})$$
as required for **).
Now by *) for every $x\in 2^\N$  consider an antichain $\{A_s: s\subseteq x\}$. By the
subsequential separation property for each $x\in 2^\N$ there are $A_x\in \mathcal A$ as in \ref{sspdefinition}.
We will show that $\{A_x: x\in 2^\N\}$ is the required independent family. Let $x_1, ..., x_n$ be distinct
elements of $2^\N$ and let $F\subseteq \{1,..., n\}$ and $G=\{1,..., n\}\setminus F$. Let
$m\in \N$ be such that $u_i=x_i|m$ are all distinct (and so none is included in the other). 
For $i\leq n$ let $m_i\geq m$ be such that $A_{x_i|m_i}\leq A_{x_i}$ if $i\in F$ and 
$A_{x_i|m_i}\wedge A_{x_i}=0$ if $i \in G$. The sequences $x_1|m_1,..., x_n|m_n$ satisfy
the hypothesis of **) for $u_1,..., u_n$, so 
$$0\not=\bigwedge_{1\leq i\leq n} A_{x_i|m_i}\leq
\bigwedge_{i\in F} A_{x_i}\wedge \bigwedge_{i\in G} A_{x_i}^{-1}$$
as required for the independence of $\{A_x: x\in 2^\N\}$.
\end{proof}

In the literature there are several more weakenings of the subsequential completeness property  which are stronger that 
our weak subsequential separation property, most notably the subsequential interpolation property introduced in 
\cite{freniche}
and applied in several other papers
and the subsequential separation property of \cite{haydonserdica}. Hence the above theorem applies to the algebras with these properties as well.

\begin{corollary} If $\mathcal A$ is a Boolean algebra having a 
subsequential separation property and $K_{\mathcal A}$ is its Stone space, then
$\beta \N$ is a subspace of $K_{\mathcal A}$ and $l_\infty$ is a quotient of
$C(K_{\mathcal A})$.
\end{corollary}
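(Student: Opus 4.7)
The plan is to combine Theorem~\ref{maintheorem} with the projectivity of $\beta\N$ in the category of compact Hausdorff spaces. First I would observe that the subsequential separation property of Definition~\ref{hsspdefinition} implies the weak subsequential separation property of Definition~\ref{sspdefinition}: if the set $\{n\in\N : A_{2n}\leq A \text{ and } A_{2n+1}\wedge A=0\}$ is infinite, then both $\{n : A_n\leq A\}$ and $\{n : A_n\wedge A=0\}$ are infinite. Hence Theorem~\ref{maintheorem} applies and yields an independent family $\{A_x : x\in 2^\N\}\subseteq \mathcal A$ of cardinality $\cc$. By the very definition of independence, the subalgebra of $\mathcal A$ generated by this family is free on $\cc$ generators, so its Stone space is the Cantor cube $\{0,1\}^\cc$. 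Dualising the inclusion of this free subalgebra into $\mathcal A$ produces a continuous surjection $\phi: K_{\mathcal A}\twoheadrightarrow \{0,1\}^\cc$.

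The crux is to lift, through $\phi$, an embedding $\iota: \beta\N \hookrightarrow \{0,1\}^\cc$ to an embedding of $\beta\N$ into $K_{\mathcal A}$. Such an $\iota$ exists because $\beta\N$ is a zero-dimensional compact Hausdorff space of weight $\cc$ (a base of clopens is in bijection with $\mathcal P(\N)$), and every such space embeds as a closed subspace of $\{0,1\}^\cc$. For the lift I would use that $\beta\N$, as the Stone space of the complete Boolean algebra $\mathcal P(\N)$, is extremally disconnected, hence projective in the category of compact Hausdorff spaces. Projectivity applied to $\phi$ and $\iota$ then produces a continuous $\tilde\iota: \beta\N\to K_{\mathcal A}$ with $\phi\circ\tilde\iota=\iota$; injectivity of $\iota$ forces injectivity of $\tilde\iota$, and a continuous injection from a compact space into a Hausdorff space is automatically a topological embedding. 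Thus $\beta\N$ is homeomorphic to a closed subspace of $K_{\mathcal A}$.

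The Banach space statement then follows in standard fashion. Since $\tilde\iota(\beta\N)$ is closed in $K_{\mathcal A}$, the Tietze extension theorem shows that the restriction map $C(K_{\mathcal A})\to C(\beta\N)\cong l_\infty$ is a surjective bounded linear operator, exhibiting $l_\infty$ as a Banach space quotient of $C(K_{\mathcal A})$. The main nontrivial ingredient is the projective lifting of $\iota$ through $\phi$; the passage from the independent family to the surjection onto $\{0,1\}^\cc$ is a routine application of Stone duality, and the final step is a classical general topology argument.
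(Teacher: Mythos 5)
Your proof is correct, and its skeleton matches the paper's: both reduce to Theorem~\ref{maintheorem} (after observing that the subsequential separation property of Definition~\ref{hsspdefinition} implies the weak one of Definition~\ref{sspdefinition}), both exploit the free subalgebra on $\cc$ generators that the independent family generates, and both finish with the Tietze restriction argument. Where you genuinely diverge is in the transfer step, and the two arguments are Stone duals of one another. The paper stays on the algebra side: it maps the free subalgebra homomorphically onto $\wp(\N)$ (any Boolean algebra of cardinality at most $\cc$ is such an image) and invokes the Sikorski extension theorem --- injectivity of the complete Boolean algebra $\wp(\N)$ --- to extend this to a surjective homomorphism $\mathcal A\to\wp(\N)$, whose Stone dual is the required embedding $\beta\N\hookrightarrow K_{\mathcal A}$. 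You dualize first, obtaining the continuous surjection $K_{\mathcal A}\twoheadrightarrow\{0,1\}^{\cc}$, and then invoke Gleason's theorem --- projectivity of the extremally disconnected compact space $\beta\N$ --- to lift an embedding $\beta\N\hookrightarrow\{0,1\}^{\cc}$ through that surjection; the lift is injective because it factors an injection, hence an embedding by compactness. Both routes are sound. The paper's is marginally lighter, since Sikorski's theorem is elementary Boolean-algebraic bookkeeping, while Gleason's projectivity theorem in its full compact Hausdorff form is heavier machinery; in the zero-dimensional setting you actually need, however, it is precisely the Stone dual of Sikorski's theorem, so nothing essentially new is being used. A small bonus of your write-up is that it makes explicit the implication between the two separation properties, which the paper only mentions in passing in the paragraph following Theorem~\ref{maintheorem}.
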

\begin{proof} It is well know that  a free Boolean algebra with continuous many
generators maps homeomorphically onto any Boolean algebra of cardinaliy continuuum, and so
e.g. onto $\wp(\N)$. Use the Sikorski extension theorem to obtain a homeomorphism
of $\mathcal A$ onto $\wp(\N)$. By the Stone duality, it follows that
the Stone space of  $\mathcal A$, which is homeomorphic to $\beta\N$,  is a subspace
of $K_{\mathcal A}$. Restricting continuous functions on $K_{\mathcal A}$ to a copy of
$\beta\N$ gives, by the Tietze extension theorem, a norm one linear operator onto
$C(\beta\N)$ which is known to be isometric to $\l_\infty$.
\end{proof}

It follows that many Banach spaces present in the literature  have $l_\infty$ as a quotient.
In particular the spaces of \cite{haydon} or \cite{few}. In \cite{few} besides a Boolean
separations a lattice version of the subsequential
completeness property is considered for connected compact $K$. Namely we consider (see 5.1. of \cite{few})
spaces $K$ such that given any pairwise disjoint ($f_n^. f_m=0$) sequence $(f_n)_{n\in \N}$
of continuous functions $f_n: K\rightarrow [0, 1]$  there  is an infinite
$M\subseteq \N$ such that in $C(K)$ there is the supremum of $(f_n)_{n\in M}$
It is not difficult to generalize the proof of \ref{maintheorem} to conclude that such $K$s 
 always contain $\beta\N$ as well.

\section{The Grothendieck property of Banach spaces}

In this section we would like to direct the attention of the reader to some links between
the weak subsequential separation property and the Grothendieck property which originated in
the theory of Banach spaces.

\begin{definition} Let $X$ be a Banach space.  We say that $X$ has the Grothendieck property
if and only if the weak$^*$ convergence of sequences in $X^*$ is equivalent to the weak convergence.
A Boolean algebra has the Grothendieck property if and only if the Banach spaces
$C(K_{\mathcal A}$ has the Grothendieck property.
\end{definition}

Grothendieck property for Boolean algebras was introduced and first investigated by Schachermayer in
\cite{schachermayer}.  It can be relatively nicely characterized using finitely additive signed measures
on Boolean algebras. Recall that
the Riesz representation theorem says that the dual to a $C(K)$ space is isometric to the space of Radon
measures on $K$ with the variation norm, i.e., all continuous functionals on $C(K)$ are of the form $\int fd\mu$ for
$\mu$ a Radon measure on $K$. Radon measure means signed, countably additive, borel, regular measure.
If $K$ is totally disconnected (has a basis of clopen sets), then any finitely additive, bounded, signed  measure
on $Clop(\mathcal A)$ extends uniquely to a Radon measure  on $K$. So there is a one to one correspondence
between such measures and elements of the dual Banach space to $C(K_{\mathcal A})$. For
more information on this see \cite{semadeni}.

\begin{lemma}\label{lemmagrothen} Suppose $\mathcal A$ is a Boolean algebra, $K_{\mathcal A}$ its Stone space and
$C(K_{\mathcal A})$ the Banach space of all real-valued continuous functions on $K_{\mathcal A}$
with the supremum norm. $C(K_{\mathcal A})$ has the Grothendieck property if and only if
whenever 
\begin{itemize} 
\item $\{A_n: n\in \N\}$ is an antichain of $\mathcal A$,
\item $\varepsilon>0$,
\item $\mu_n$ is a bounded sequence of bounded, finitely additive signed measures on $\mathcal A$
such that $|\mu_n(A_n)|>\varepsilon$
\end{itemize}
then there is $A\in \mathcal A$ such that 
$$(\mu_n(A): n\in \N)$$
is not a convergent sequence of the reals.
\end{lemma}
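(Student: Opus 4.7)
\bigskip
\noindent\textbf{Proof proposal.}
The plan is to prove both directions separately, using Stone duality to identify bounded finitely additive signed measures on $\mathcal{A}$ with Radon measures on $K_{\mathcal{A}}$, and relying on the classical Grothendieck--Dieudonn\'e characterization of relatively weakly compact sets in $M(K_{\mathcal{A}})$: a bounded family $H$ of Radon measures on $K_{\mathcal{A}}$ is relatively weakly compact if and only if $\sup_{\nu\in H}|\nu([A_k])|\to 0$ for every antichain $(A_k)$ in $\mathcal{A}$ (total disconnectedness of $K_{\mathcal{A}}$ lets us pass freely between disjoint open sets and clopen antichains).

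For the forward direction I would fix an antichain $(A_n)$, $\varepsilon>0$ and a bounded sequence $(\mu_n)$ with $|\mu_n(A_n)|>\varepsilon$, and argue by contradiction assuming that $(\mu_n(A))_n$ converges for every $A\in\mathcal{A}$. The pointwise limit $\mu(A):=\lim_n\mu_n(A)$ is automatically a bounded finitely additive measure on $\mathcal{A}$. Since $\mathcal{A}$-simple functions are uniformly dense in $C(K_{\mathcal{A}})$ and $(\mu_n)$ is bounded, pointwise convergence on $\mathcal{A}$ upgrades to weak$^*$ convergence $\mu_n\to\mu$ in $C(K_{\mathcal{A}})^*$. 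The Grothendieck property then yields weak convergence $\mu_n\to\mu$, so $\{\mu_n\}$ is relatively weakly compact, and Grothendieck--Dieudonn\'e forces $\sup_m|\mu_m([A_k])|\to 0$ as $k\to\infty$. In particular $|\mu_n(A_n)|\to 0$, contradicting $|\mu_n(A_n)|>\varepsilon$.

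For the converse I would assume the combinatorial condition and suppose that Grothendieck fails, fixing a weak$^*$-convergent sequence $(\mu_n)$ that is not weakly convergent. After subtracting the weak$^*$ limit and invoking Banach--Steinhaus, I may assume $\mu_n\to 0$ weak$^*$, $(\mu_n)$ bounded, and $\mu_n\not\to 0$ weakly; then $(\mu_n)$ is not relatively weakly compact, because any weak cluster point would be forced to coincide with the weak$^*$ limit $0$. Grothendieck--Dieudonn\'e now supplies an antichain $(B_k)$ in $\mathcal{A}$ and $\varepsilon>0$ with $\sup_n|\mu_n(B_k)|>\varepsilon$ for infinitely many $k$. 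For each such $k$ I pick a witness $n(k)$; using the elementary bound $\sum_k|\mu_n(B_k)|\leq\|\mu_n\|$ valid on any antichain, the indices $n(k)$ cannot remain in a bounded range, so after a further extraction I obtain strictly increasing $(k_l),(n_l)$ with $|\mu_{n_l}(B_{k_l})|>\varepsilon$. The combinatorial hypothesis applied to $\nu_l:=\mu_{n_l}$ and $A_l:=B_{k_l}$ produces some $A\in\mathcal{A}$ with $(\nu_l(A))_l$ non-convergent, contradicting $\mu_n(A)\to 0$.

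The main subtle point I anticipate is the simultaneous extraction of strictly increasing $(k_l)$ and $(n_l)$ in the converse, which rests on the elementary but essential observation that a bounded finitely additive measure is summable along any antichain. A secondary care-requiring step is the Grothendieck--Dieudonn\'e extraction itself, where one must convert a disjoint sequence of open sets witnessing the failure of uniform countable additivity of $\{\mu_n\}$ into an honest antichain in $\mathcal{A}$; this is possible because the clopens $[A]$ with $A\in\mathcal{A}$ form a basis for $K_{\mathcal{A}}$.
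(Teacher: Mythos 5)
Your proof is correct, but the forward implication takes a genuinely different route from the paper's. For the converse you essentially follow the paper: pass to a sequence that is not relatively weakly compact, invoke the Grothendieck--Dieudonn\'e characterization to get a disjoint sequence on which the measures stay large, extract a diagonal antichain, and feed it to the combinatorial hypothesis; your normalization (subtracting the weak$^*$ limit so that $\mu_n\to 0$ weak$^*$) neatly avoids the paper's two-case analysis via Eberlein--Smulian of whether some subsequence admits no weakly convergent subsequence, and your summability observation $\sum_k|\mu_n(B_k)|\leq\|\mu_n\|$ is exactly what justifies the simultaneous extraction of increasing indices, which the paper leaves implicit. For the forward implication, however, the paper fixes the antichain, applies Rosenthal's lemma to pass to a subsequence with $\sum_{n\neq k}|\mu_k(A_n)|<\varepsilon/3$, and uses the characteristic function of the Borel set $\bigcup_n[A_{2n}]$, viewed as an element of the bidual, as an explicit witness that $(x_n^*)$ is not weakly convergent; the Grothendieck property then forbids weak$^*$ convergence, and density of the simple functions produces the clopen $A$. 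You instead argue contrapositively: pointwise convergence on $\mathcal A$ of a uniformly bounded sequence upgrades to weak$^*$ convergence, Grothendieck upgrades that to weak convergence, hence relative weak compactness, and Grothendieck--Dieudonn\'e then contradicts $|\mu_n(A_n)|>\varepsilon$. Your version is shorter and uses Grothendieck--Dieudonn\'e symmetrically as the single workhorse in both directions; the paper's version is more hands-on, exhibiting a concrete separating bidual element and needing only Rosenthal's lemma in that direction. The two delicate points you flag are handled correctly, with the small caveat that converting disjoint open sets into a clopen antichain uses inner regularity of the total variations and not merely that the clopen sets form a basis, though this is routine for Radon measures on a Stone space.
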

\begin{proof}
Suppose that $C(K_{\mathcal A})$  has the Grothendieck property and $A_n$s, $\varepsilon$ and
$\mu_n$s are as above. Let $x_n^*$ be elements of the dual to  $C(K_{\mathcal A})$ which
are uniquely determined by the condition $x_n^*(1_{[A]})=\mu_n(A)$ for $A\in \mathcal A$.
Using the Rosenthal lemma (see e.g., \cite{diestel}) going to a subsequence, we may assume that 
$$\Sigma_{n\in\N\setminus\{k\}} |\mu_k(A_n)|<\varepsilon/3$$
holds for every $k\in \N$. It follows that $|\int 1_{\bigcup_{n\in \N}{[A_{2n}]}}dx_k^*|<\varepsilon/3$
if $k$ is an odd integer and $|\int 1_{\bigcup_{n\in \N}{[A_{2n}]}}dx_k^*|> 2\varepsilon/3$ if
$k$ is an even integer. In other words the element of the bidual
to $C(K_{\mathcal A})$ corresponding to the borel set $\bigcup_{n\in \N}{[A_{2n}]}$ of $K_{\mathcal A}$
witnesses the fact that $(x_n^*)_{n\in \N}$ is not weakly convergent. By the Grothendieck property
it is not weakly$^*$ convergent. As $(x_n^*)_{n\in \N}$ is bounded, it means that
$(\mu_n(A): n\in \N)$
is not a convergent sequence of the reals for some $A\in \mathcal A$ since
the span of characteristic functions of clopen sets is dense in $C(K_{\mathcal A})$ by
the Stone-Weierstrass theorem.

For the converse implication suppose that $(x_n^*)_{n\in\N}$ is a bounded sequence
in the dual to $C(K_{\mathcal A})$ which is not weakly convergent,  assume the condition of the lemma
 and let us conclude that  $(x_n^*)_{n\in\N}$ is not weakly$^*$ convergent.
$(x_n^*)_{n\in\N}$ as a nonconvergent sequence in a compact dual ball has at least two distinct
accumulation points. If every sequence of $(x_n^*)_{n\in\N}$ contains a weakly convergent
subsequence (and so weakly$^*$ convergent), then we conclude that $(x_n^*)_{n\in\N}$ is
not weakly$^*$ convergent as required.
Otherwise by choosing a subsequence of
$(x_n^*)_{n\in\N}$ we may assume that it does not have any weakly convergent subsequence.
So by the Eberlein-Smulian theorem $\{x_n^*: n\in \N\}$ is not relatively weakly compact, and hence
by the Grothendieck-Dieudonne characterization of weakly compact subsets of the duals to
Banach spaces $C(K)$ we obtain an antichain $\{A_n: n\in \N\}$ of $\mathcal A$, an
 $\varepsilon>0$, and an infinite $M\subseteq \N$ such that
such that $|\nu_n([A_n])|>\varepsilon$ where $\nu_n$s are the Radon measures
on $K_{\mathcal A}$ corresponding to $x_n^*$s.  Note that the restrictions $\mu_n$ of
$\nu_n$s to the family of all characteristic functions of clopen subsets of $K_{\mathcal A}$ satisfy
the condition of the  lemma, hence there is an $A\in \mathcal A$ as stated there. 
The function $1_{[A]}$  witnesses the fact that
$(x_n^*)_{n\in\N}$ is not weak$^*$ convergent as required.

\end{proof}

It is proved by Haydon  in 6.3 of \cite{haydonserdica} that Boolean algebras with the subsequential
separation property have the Grothendieck property.

\begin{definition}\label{positivegrothendef} We say that a Boolean algebra
 $\mathcal A$  a positive Grothendieck property if and only if
whenever 
\begin{itemize} 
\item $\{A_n: n\in \N\}$ is an antichain of $\mathcal A$,
\item $\varepsilon>0$,
\item $\mu_n$ is a bounded sequence of bounded, finitely additive non-negative measures on $\mathcal A$
such that $\mu_n(A_n)>\varepsilon$
\end{itemize}
then there is $A\in \mathcal A$ such that 
$$(\mu_n(A): n\in \N)$$
is not a convergent sequence of the reals.
\end{definition}

\begin{proposition} If a Boolean algebra $\mathcal A$ has a weakly subsequential separation
property, then $C(K_{\mathcal A})$ has the positive Grothendieck property.
\end{proposition}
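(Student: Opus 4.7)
The plan is to argue by contradiction: assuming that $(\mu_n(A))_{n\in\N}$ converges for every $A\in\mathcal{A}$, I will produce for each $k\in\N$ a collection of $2^k$ pairwise disjoint elements of $\mathcal{A}$ each of $\mu$-mass at least $\varepsilon$, where $\mu(A):=\lim_n\mu_n(A)$ is the (non-negative, finitely additive) limit measure. Since the sequence $(\mu_n)$ is uniformly bounded, $\mu$ has bounded total mass; by finite additivity, the total mass of $\mu$ would then exceed $2^k\varepsilon$ for every $k$, which is the desired contradiction.

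The construction proceeds by induction on $k$, maintaining pairwise disjoint $C_1,\dots,C_{2^k}\in\mathcal{A}$ together with infinite index sets $M_1,\dots,M_{2^k}\subseteq\N$ such that $A_n\leq C_i$ whenever $n\in M_i$. The base case $k=0$ is $C_1=1_{\mathcal{A}}$, $M_1=\N$, where $1_{\mathcal{A}}$ denotes the maximum element of $\mathcal{A}$. In the inductive step, for each $i$ I apply the weak subsequential separation property to the antichain $\{A_n:n\in M_i\}$ to obtain $B_i\in\mathcal{A}$ splitting $M_i$ into two infinite pieces $\{n\in M_i: A_n\leq B_i\}$ and $\{n\in M_i: A_n\wedge B_i=0\}$, and then replace $C_i$ by the two elements $C_i\wedge B_i$ and $C_i\wedge B_i^{-1}$, each carrying the corresponding half of $M_i$ as its new index set. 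Pairwise disjointness of the $2^{k+1}$ new elements follows at once from the pairwise disjointness of $C_1,\ldots,C_{2^k}$.

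Non-negativity of the $\mu_n$ and monotonicity then force $\mu(C_i)\geq\varepsilon$ at every stage $k\geq 1$: for every $n\in M_i$ one has $\mu_n(C_i)\geq\mu_n(A_n)>\varepsilon$, so the assumed full limit $\mu(C_i)=\lim_n\mu_n(C_i)$ satisfies $\mu(C_i)\geq\varepsilon$. This is the only place where positivity of the $\mu_n$ intervenes, and it is precisely the reason that the weak subsequential separation property delivers the positive Grothendieck property rather than the full Grothendieck property; no Rosenthal-type disjointification is needed because nothing has to be controlled outside the antichain.

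The only aspect requiring real care is the bookkeeping during the induction, but the key conceptual observation is that the weak subsequential separation property applies to every infinite sub-antichain of $\{A_n\}$, which allows the branching construction to double the lower bound on the total mass of $\mu$ at each level until it exceeds $\sup_n \|\mu_n\|$.
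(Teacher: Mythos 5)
Your proof is correct, and it takes a genuinely different route from the paper's. The paper's argument is measure-theoretic and constructive: it invokes Talagrand's decomposition lemmas to write $\mu_n=\lambda_n+\nu_n$ with $(\nu_n)$ weakly convergent and $\lambda_n$ concentrated on a fresh antichain $(B_n)$ with constant total mass $\eta$, uses the Dieudonn\'e--Grothendieck criterion to see $\eta\neq 0$, and then applies the weak subsequential separation property \emph{once} to $(B_n)$ to exhibit an explicit $A$ on which $\mu_n(A)$ oscillates by roughly $\eta/4$ around the convergent sequence $\nu_n(A)$. You instead argue by contradiction: if $\mu(A):=\lim_n\mu_n(A)$ exists for every $A$, then $\mu$ is a bounded non-negative finitely additive measure, and your binary-tree iteration of the separation property manufactures $2^k$ pairwise disjoint sets of $\mu$-mass at least $\varepsilon$ (each $C_i$ dominates $A_n$ for infinitely many $n$ in its index set, so $\mu_n(C_i)\ge\mu_n(A_n)>\varepsilon$ along that subsequence, and the assumed full limit then exceeds $\varepsilon$), contradicting $\mu(1)\le\sup_n\|\mu_n\|$ by finite additivity. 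All the steps check out: the index-set bookkeeping is sound ($A_n\le C_i$ and $A_n\le B_i$ give $A_n\le C_i\wedge B_i$, and $A_n\wedge B_i=0$ gives $A_n\le C_i\wedge B_i^{-1}$), and the reindexed sub-antichains are legitimate inputs to the separation property. What the two approaches buy: yours is completely elementary and self-contained, needing neither Rosenthal's lemma nor the Talagrand/Haydon decomposition nor Dieudonn\'e--Grothendieck; the price is that it is non-constructive (no witness $A$ is produced), uses the separation property unboundedly many times rather than once, and, since it rests entirely on monotonicity of non-negative measures, it cannot be adapted to signed measures --- whereas the paper's decomposition template is exactly the one that, under the stronger subsequential separation property, yields the full Grothendieck property as in Haydon's work.
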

\begin{proof} 
 Let $\{A_n: n\in \N\}$, $\varepsilon>0$
and $\mu_n$s be as in \ref{positivegrothendef}. 
Applying Lemmas 1 and  2 of \cite{talagrandnew} (see also 6.2., 6.3. of \cite{haydonserdica})
we may assume that there is an antichain $\{B_n: n\in \N\}$ and finitely additive
bounded measures $\lambda_n$ and $\nu_n$ such that $\mu_n=\lambda_n+\nu_n$,
where $\nu_n$s weakly converges, $\lambda_n(K)=\eta$ and 
$\lambda_n(B_n)>3\eta/4$.  By the Dieudonne-Grothendieck theorem 
applied to the $\mu_n$s we conclude that they do not form a relatively weakly
compact sets and hence are not a weakly convergent sequence, hence $\eta\not=0$.

Now use  the weak subsequential separation property to obtain $A\in \mathcal A$
such that both of the sets $M_1=\{n\in \N: B_n\leq A\}$ and $M_0=\{n\in \N: B_n\wedge A=0\}$
are infinite. For each $n\in M_1$ we have
 $$\mu_n(A)=\lambda_n(A\cap B_n)+\lambda_n(A\setminus B_n)+\nu_n(A)>3\eta/4-\eta/4+\nu_n(A)
=\eta/2+\nu_n(A),$$
and for each $n\in M_0$ we have
 $$\mu_n(A)=\lambda_n(A\cap B_n)+\lambda_n(A\setminus B_n)+\nu_n(A)<\eta/4+\nu_n(A).$$
As $\nu_n(A)$ converges, $\eta\not=0$, since weakly
 convergent sequences are weakly$^*$ convergent, we conclude that
$\mu_n(A)$s do not converge as required.

\end{proof}

\begin{proposition} There is a Boolean algebra with the weak subsequential
separation property which does not have the Grothendieck property and so does not have the
subsequential separation property.
\end{proposition}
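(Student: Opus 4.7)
The plan is to exhibit a concrete small subalgebra of $\wp(\N \times \{0,1\})$. Let $\Omega := \N \times \{0,1\}$ and, for $X \subseteq \Omega$ and $i \in \{0,1\}$, write $X_i := \{n\in\N : (n,i) \in X\}$. Define
$$\mathcal{A} := \{X \subseteq \Omega : X_0 \triangle X_1 \hbox{ is finite}\}.$$
A direct check (using $(X\cup Y)_i = X_i\cup Y_i$ and $(X^c)_i = X_i^c$) shows that $\mathcal A$ is a Boolean subalgebra of $\wp(\Omega)$.

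To verify the weak subsequential separation property, take any antichain $\{C_n : n \in \N\}$ in $\mathcal A$. Two elements of $\mathcal A$ whose $0$-columns are both cofinite in $\N$ must intersect, so after discarding at most one member we may assume each $(C_n)_0$ and $(C_n)_1$ is finite. Put $S_n := (C_n)_0 \cup (C_n)_1$. Because $\{(C_n)_0\}_n$ and $\{(C_n)_1\}_n$ are each pairwise disjoint families in $\N$, every $k\in\N$ lies in at most two of the sets $S_n$. Hence the intersection graph on the index set $\N$ (an edge $\{n,m\}$, $n\neq m$, whenever $S_n\cap S_m\neq\emptyset$) has degree at most $|S_n|<\infty$ at each vertex $n$, and a standard greedy argument yields an infinite independent set $I\subseteq\N$. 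Split $I = I_0\sqcup I_1$ into two infinite pieces, let $B := \bigcup_{n\in I_0}S_n\subseteq\N$ and put $A := B\times\{0,1\}$. Then $A_0 = A_1 = B$, so $A\in\mathcal A$; and by construction $C_n \leq A$ for every $n \in I_0$ while $C_n\wedge A = 0$ for every $n \in I_1$, as required by Definition \ref{sspdefinition}.

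To defeat the Grothendieck property, consider the antichain $C_n := \{(n,0)\}\in\mathcal A$ together with the uniformly bounded, finitely additive signed measures $\mu_n := \delta_{(n,0)} - \delta_{(n,1)}$ on $\mathcal A$. Then $|\mu_n(C_n)| = 1$, while for each fixed $A\in\mathcal A$ the finiteness of $A_0\triangle A_1$ forces $\mu_n(A) = 1_{n\in A_0} - 1_{n\in A_1}$ to vanish for all but finitely many $n$, so $(\mu_n(A))_{n\in\N}$ converges. By Lemma \ref{lemmagrothen}, $\mathcal A$ does not have the Grothendieck property, and then the result of Haydon from \cite{haydonserdica} cited immediately before Definition \ref{positivegrothendef} shows that $\mathcal A$ also fails the subsequential separation property.

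The only non-routine step is the extraction of the infinite independent set in the intersection graph; the algebra $\mathcal A$ is designed precisely so that each $k\in\N$ appears in at most two of the sets $S_n$, which forces finite vertex-degrees and hence the existence of $I$, while simultaneously producing the cancelling signed-measure pairs $\delta_{(n,0)} - \delta_{(n,1)}$ that defeat the Grothendieck property.
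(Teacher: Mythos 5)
Your algebra is, up to the bijection $(n,0)\mapsto 2n$, $(n,1)\mapsto 2n+1$, exactly the classical Schachermayer example used in the paper, and your treatment of the failure of the Grothendieck property (the measures $\delta_{(n,0)}-\delta_{(n,1)}$ fed into Lemma \ref{lemmagrothen}) and the final appeal to Haydon's theorem from \cite{haydonserdica} are correct. The gap is in the verification of the weak subsequential separation property. The reduction ``after discarding at most one member we may assume each $(C_n)_0$ and $(C_n)_1$ is finite'' is false: at most one member of the antichain can have a \emph{cofinite} $0$-column, but nothing prevents infinitely many members from having infinite, non-cofinite columns. For instance, partition $\N$ into infinitely many infinite sets $P_n$ and put $C_n:=P_n\times\{0,1\}$; this is an antichain in $\mathcal A$ in which every column is infinite and non-cofinite. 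Consequently the degree bound ``at most $|S_n|<\infty$'' on which the greedy extraction of the independent set $I$ rests is not justified, since $|S_n|$ may well be infinite.

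The argument is repairable, because the intersection graph \emph{is} locally finite, just not for the reason you give. If $m\neq n$ and $k\in S_n\cap S_m$, then since $(C_n)_0\cap(C_m)_0=(C_n)_1\cap(C_m)_1=\emptyset$ by disjointness of $C_n$ and $C_m$, the element $k$ must lie in $(C_n)_0\cap(C_m)_1$ or in $(C_n)_1\cap(C_m)_0$, and in either case $k\in (C_n)_0\triangle(C_n)_1$, which is finite precisely because $C_n\in\mathcal A$. Moreover each such $k$ lies in at most one $(C_m)_0$ and at most one $(C_m)_1$, so the degree of the vertex $n$ is at most $2\,|(C_n)_0\triangle(C_n)_1|<\infty$. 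With this substitute bound the rest of your argument (the greedy independent set, the split $I=I_0\sqcup I_1$, and $A:=B\times\{0,1\}$) goes through verbatim, and in fact supplies the details that the paper's one-line verification of the separation property leaves to the reader.
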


\begin{proof} This is a classical example $\mathcal A$ (see e.g. \cite{schachermayer}) of the Boolean algebra
of all subsets $M$ of $\N$ such that $2k\in M$ if and only if $2k+1\in M$
for all but finitely many $k\in \N$. It is well known that
$\mu_n=\delta_{2n}-\delta_{2n+1}$ form a weakly$^*$ convergent sequence 
in $C(K_{\mathcal A})$ which is not
weakly convergent and so $\mathcal A$ does not have the Grothendieck property.
On the other hand given an antichain $\{A_n: n\in \N\}$ in $\mathcal A$ there is an infinite
$M\subseteq \N$ such that there are pairwise disjoint  $B_n\in \mathcal A$ with $A_n\subseteq B_n$
for $n\in M$ such that $2k\in B_n$ if and only if $2k+1\in B_n$ for all $k\in \N$ and al $n\in M$. 
Infinite unions
of such $B_n$s provide elements witnessing the separation.
\end{proof}

The Grothendieck property of $C(K_{\mathcal A})$ does not imply in ZFC the existence of an independent family
of cardinality continuum in $\mathcal A$. Namely assuming the continuum hypothesis Talagrand proved in 
\cite{talagrandnew} that there is a Boolean algebra $\mathcal A$ such that
$C(K_{\mathcal A})$ has the Grothendieck property but $l_\infty$ is not a quotient of
$C(K_{\mathcal A})$, in particular $\beta\N$ is not a subset of $K_{\mathcal A}$ and so
$\mathcal A$ has no uncountable independent family. Moreover
it is proved in \cite{brech} that
it is consistent that there is a Boolean algebra $\mathcal A$ which has the Grothendieck
property but has cardinality strictly smaller than $2^\omega$ (ground model after adding Sacks reals).
On the other hand 
 assuming $\frak p=2^\omega$ Haydon, Levy and Odell proved in \cite{HLO}
that each nonreflexive Banach space (in particular each of the form $C(K_{\mathcal A})$ 
for $\mathcal A$ infinite) with the
Grothendieck property has $l_\infty$ as a quotient. However we do not know the answer
to the following:

\begin{question}
 Is it consistent that  each Boolean algebra with the Grothendieck property 
has an independent family of cardinality continuum?
\end{question}

\section{Efimov's problem}

The affirmative answer to the question above could  be considered a weak solution to the Efimov problem
(see \cite{hart}) whether it is consistent if any compact $K$ without a nontrivial convergent sequence
has a subspace homeomorphic to $\beta\N$ or there is in ZFC a
compact space without a nontrivial convergent sequence and without a copy of $\beta\N$. 
Indeed, subsets of a compact space $K$ can be considered as subsets of the dual ball
to the Banach space $C(K)$ or Radon measures (points of $K$ correspond to pointwise measures)
with the weak$^*$ topology.
And so, instead of nontrivial convergent sequences
or copies of $\beta\N$ in $K$ one can ask for the same subspaces in the dual ball or the Radon measures.
The Grothendieck property 
of $C(K)$, in a sense, asserts that there are no nontrivial convergent sequences among Radon measures (not just pointwise measures, here nontrivial means
those which are not  convergent in the weak topology, or those which can be separated by a borel subset of 
the compact space, the notion changes as the dual ball always contains copies of intervals) and it easily implies 
the nonexistence of nontrivial (in the sense of having distinct terms) sequences of points of $K$. Of course
the negative answer to the above question would solve the original Efimov's problem. 

Note that Efimov's problem is equivalent to asking if an analogous property to our weak subsequential
separation property for points of the Stone space instead of elements of the Boolean algebra implies
the existence of an independent family of cardinality continuum.

Also Talagrand  proved (see \cite{talagrandnew}) that the dual ball to $C(K)$ contains a copy of $\beta\N$ if and only if
$\ell_\infty$ is a quotient of $C(K)$. So another  weak version of the Efimov problem would be to ask if it is consistent that whenever $K$ has no convergent sequence, then $\ell_\infty$ is a quotient of $C(K)$.
One should note here that consistently it is not the case (again the example of \cite{talagrandnew}) and that
the result of \cite{HLO} gives the consistency of: for every compact $K$ the dual ball to $C(K)$
with the weak$^*$ topology either contains a copy of $\beta\N$ or a convergent sequence which is not
weakly convergent.

\bibliographystyle{amsplain}

\begin{thebibliography}{20}



\bibitem{brech}
C. Brech,
\newblock On the density of Banach spaces $C(K)$  with
the Grothendieck property.
\newblock  {\em Proc. Amer. Math. Soc.},  134 (2006), no. 12, 3653 - 3663.

\bibitem{diestel} J. Diestel, \newblock Sequences and series in Banach spaces. 
\newblock{\em Graduate Texts in Mathematics}, 92. Springer-Verlag, New York, 1984.

\bibitem{freniche}  F. J. Freniche, \newblock The Vitali-Hahn-Saks theorem for Boolean algebras with the subsequential interpolation property. \newblock {\em Proc. Amer. Math. Soc.} 92 (1984), no. 3, 362--366. 


\bibitem{hart} K. P. Hart, \newblock Efimov's problem (171--177) in \newblock{\em Open problems in topology. II.} 
Edited by Elliott Pearl. Elsevier B. V., Amsterdam, 2007. 


\bibitem{haydon}   R. Haydon, \newblock A nonreflexive Grothendieck space that does not contain 
$\ell_\infty$. \newblock {\em Israel J. Math.} 40 (1981), no. 1, 65--73.


\bibitem{HLO}
R. Haydon, E. Odell, M. Levy,
\newblock On sequences without weak$\sp *$ convergent convex block subsequences.
\newblock {\em Proc. Amer. Math. Soc.} 100 (1987), no. 1, 94--98.

\bibitem{haydonserdica} R. Haydon, \newblock 
Boolean rings that are Baire spaces. \newblock {\em Serdica Math. J.} 27 (2001), no. 2, 91--106. 

\bibitem{handbookbas} S. Koppelberg, \newblock Handbook of Boolean algebras. Vol. 1. Edited by J. Donald Monk and Robert Bonnet. North-Holland Publishing Co., Amsterdam, 1989.

\bibitem{few} P. Koszmider, \newblock   Banach spaces of continuous functions with few operators. 
\newblock{\em Math. Ann.} 330 (2004), no. 1, 151–183.


\bibitem{schachermayer}
W. Schachermeyer,
\newblock On some classical measure-theoretic theorems for non-sigma-complete Boolean algebras.
\newblock {\em  Dissertationes Math. (Rozprawy Mat.)}  214  (1982), 33 pp.

\bibitem{semadeni} Z. Semadeni, \newblock Banach spaces of continuous functions. Vol. I. 
\newblock {\em Monografie Matematyczne}, Tom 55. PWN---Polish Scientific Publishers, Warsaw, 1971.

\bibitem{talagrandnew}
M. Talagrand,
\newblock Un nouveau $C(K)$
qui poss\`ede la propri\'et\'e de Grothendieck
\newblock {\em Israel J. Math.},  37 (1980), no. 1-2, pp. 181-191.



\end{thebibliography}

\vskip 15pt
\end{document}